\documentclass[letterpaper, DIV=17]{scrartcl}

\usepackage{pgfplots}
\pgfplotsset{compat=1.18}

\usepackage{amssymb, amsthm, mathtools,bbm, bm}
\usepackage[square,sort,comma,numbers]{natbib}
\usepackage{comment}

\usepackage[none]{hyphenat}
\usepackage[english]{babel}
\usepackage{enumerate}
\usepackage{amsmath}
\usepackage{graphicx}%
\usepackage{subcaption}
\usepackage{amsthm}
\usepackage{amssymb}
\usepackage{dsfont}%
\usepackage{float}%
\usepackage{hyperref}
\usepackage{amssymb}
\usepackage{color}
\usepackage[utf8]{inputenc}
\usepackage{graphicx}
\usepackage{tikz}
\usepackage{tikz-cd}
\usepackage{verbatim} 
	\usepackage{mathtools} 
	\usepackage{mathabx} 
	\usepackage{hyperref}
	\usepackage{comment}
	\usetikzlibrary{decorations.markings}
	\usepackage{bbm} 
	\usepackage{xfrac} 
	\usepackage{tensor} 
	\usepackage{listings}  
	\usepackage{matlab-prettifier} 
	
	\usepackage{pgfplots}              
	\pgfplotsset{compat=newest}
	\usepgfplotslibrary{fillbetween}

	\usepackage[title]{appendix}

\usepackage[shortcuts]{extdash}  
	
	\newtheorem{theorem}{Theorem}[section]  
	\newtheorem{cor}[theorem]{Corollary}
	\newtheorem{prop}[theorem]{Proposition}
	\newtheorem{lemma}[theorem]{Lemma}

	\numberwithin{equation}{section}
	\theoremstyle{definition}
	\newtheorem{defn}{Definition}[section]
	\newtheorem{rem}[theorem]{Remark}

\newcommand{\btau}{\boldsymbol{\tau} }
\newcommand{\bsig}{\boldsymbol{\sigma} }

\newcommand{\beq}[1]{\begin{equation} \label{#1}}
	\newcommand{\eeq}{\end{equation}}

\renewcommand{\epsilon}{\varepsilon}

\newcommand{\eps}{\varepsilon}

\def\be{\begin{equation}}
	\def\ee{\end{equation}} 

\DeclareMathOperator\arctanh{arctanh}

\begin{document}
		\title{Lower bound on the mixing time of $p$-spin glasses}
		
		\author{Anouar Kouraich$^{1}$ and  Simone Warzel$^{1,2,3}$ \\
			\\
			\small $^1$ Department of Mathematics, TU Munich, Garching, Germany \\[-.5ex]
			\small $^2$ Munich Center for Quantum Science and Technology, Munich, Germany \\[-.5ex]
			\small $^3$ Department of Physics, TU Munich, Garching, Germany}

        \date{\small \today \\[-.5cm]  }
		\maketitle		
		
		\begin{abstract}
			We show that Glauber dynamics for $ p$-spin glass mixes exponentially slowly at inverse temperatures larger than a constant times $ \ln (p)/p $ for large enough $ p $. This is done by analyzing the energy landscape using Gaussian decompositions and establishing a bottleneck bound.
		\end{abstract}

\section{Introduction}
\subsection{Main result}
Glauber dynamics corresponding to inverse temperature $ \beta $ and a system's Hamiltonian $ H : \{ -1,1\}^N \to \mathbb{R} $, $ \bsig \mapsto H(\bsig) $ on the configuration space of $ N $ Ising spins is an irreducible, aperiodic Markov chain 
designed such that the Gibbs distribution
\begin{equation}
    \pi(\bsig) \coloneqq Z_N(\beta)^{-1} \exp\left(- \beta H(\bsig)\right) , \quad Z_N(\beta) \coloneqq \sum_{\bsig \in \{-1,1\}^N} \exp\left(- \beta H(\bsig)\right) ,
\end{equation}
is its unique stationary distribution. For Glauber dynamics, the transition matrix $ P(\bsig,\btau) $ characterizing the chain
has the property
\begin{equation}\label{eq:walk}
    P(\bsig,\btau) = 0 \quad \mbox{unless $\displaystyle  d(\bsig,\btau) \coloneqq \sum_{j=1}^N \frac{|\sigma_j - \tau_j|}{2}  \leq 1 $, }
\end{equation} 
reflecting the fact that 
in each time step, one flips at most one spin such that the Hamming distance $ d(\bsig,\btau) $ between the initial configuration $ \bsig $ and the final one $ \btau $ is at most $1$. 
The specific choice $ P(\bsig,\btau) = N^{-1} \left( 1+ \exp\left[\beta (H(\btau) - H(\bsig) )\right] \right)^{-1} $ in case $ d(\bsig,\btau) = 1$ and accordingly $ P(\bsig,\bsig) = 1- \sum_{\btau :\  d(\bsig,\btau) = 1} P(\bsig,\btau)$ corresponds to the standard Glauber Markov chain for a mean-field Hamiltonian such as the the Ising spin glass with $ p $-spin interaction 
\begin{equation}\label{eq:pspin}
    H(\bsig) \coloneqq - \frac{1}{N^{(p-1)/2}} \sum_{j_1, \dots , j_p = 1 }^N g_{j_1, \dots , j_p} \sigma_{j_1} \cdots \sigma_{j_p} .
\end{equation}
The latter is given in terms of independent and identically distributed standard normal variables $ g_{j_1, \dots , j_p}$. The energies hence form a 
Gaussian random process on $ \{ -1 , 1\}^N $ with mean zero and covariance, 
\begin{equation}\label{eq:covariance}
\mathbb{E}[H(\bsig) H(\btau) ] = N  \left( 1 - \frac{d(\bsig,\btau)}{2N} \right)^p =: N \ c_p\left( \frac{d(\bsig,\btau)}{N}\right) , 
\end{equation}
which only depends on the Hamming distance $ d $. 
The special case $ p = 2 $ corresponds to the SK model \cite{SK75}. Taking the limit $ p \to \infty $ in \eqref{eq:covariance} one arrives at the simplest Ising spin glass, the random energy model (REM), in which the energies $ H(\bsig) $ are uncorrelated. 

Spin glasses are prototypes of complex energy landscapes. One measure of their complexity is the time it takes to prepare their Gibbs distribution via Glauber dynamics. For arbitrary initial probability distribution $ \mu $ on $ \{ -1,1\}^N $, this time is characterized by the mixing time
\begin{equation}
t_{\textrm{mix}} \coloneqq \min\left\{ t \ \Big| \ \sup_{\mu} \frac{1}{2} \sum_{\btau} \left| \mu P^t(\btau) - \pi(\btau) \right| \leq \frac{1}{4} \right\} , 
\end{equation}
where $ \mu P^t $ denotes the evolved distribution at time $ t \in \mathbb{N} $. The latter is the $ t $-fold convolution of the transition kernel:
$$
\mu P^t(\btau) \coloneqq \sum_{\bsig_1 \dots \bsig_t} \mu(\bsig_t) P(\bsig_t,\btau) P(\bsig_{t-1}, \bsig_t) \dots P(\bsig_1,\bsig_2) .
$$

We establish lower bounds on the mixing time, summarized in the following main result.
\begin{theorem}\label{thm:main}
Consider any irreducible, aperiodic Markov chain, which has the Gibbs distribution of the $ p $-spin glass~\eqref{eq:pspin} as its stationary distribution and satisfies~\eqref{eq:walk}. 
There are constants $ c, C \in (0,\infty) $ and $ p_0 \in \mathbb{N} $ such that  for all $ p \geq p_0 $ and all $ \displaystyle\beta > C \  \frac{\ln p}{p}$:
\begin{equation}
   \lim_{N \to \infty } \mathbb{P}\left(  t_{\textrm{mix}} \geq e^{N c} \right) = 1 . 
\end{equation}
\end{theorem}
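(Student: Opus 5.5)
The plan is to prove the theorem through the standard bottleneck (conductance) bound. For any set $A \subset \{-1,1\}^N$ with $\pi(A) \leq 1/2$ one has
\[
t_{\textrm{mix}} \geq \frac{1}{4\,\Phi(A)}, \qquad \Phi(A) \coloneqq \frac{Q(A,A^c)}{\pi(A)}, \qquad Q(A,A^c) \coloneqq \sum_{\bsig\in A,\ \btau\notin A} \pi(\bsig)P(\bsig,\btau).
\]
By \eqref{eq:walk}, $Q(A,A^c) \leq \pi(\partial A)$, where $\partial A$ is the inner vertex boundary of $A$. The goal is therefore a random set $A$ with $\pi(\partial A)/\pi(A) \leq e^{-cN}$ and $\pi(A)\le 1/2$, holding with probability tending to one; this gives the claim for every chain in the class. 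The natural choice is a Hamming ball $A = B_{\delta N}(\bsig^*)$ around a deep configuration $\bsig^*$ with boundary sphere $S = \{\bsig : d(\bsig,\bsig^*) = \delta N\}$. Since $\pi(A)\ge \pi(\bsig^*)$, it suffices to show that
\[
\sum_{\bsig \in S} e^{-\beta H(\bsig)} \leq e^{-cN} e^{-\beta H(\bsig^*)} ,
\]
and that $\pi(A)\le 1/2$. The latter follows by symmetry: if it fails, use the ball around $-\bsig^*$ together with a disjointness argument, or run the argument in the complement.

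\textbf{Step 1 (a deep configuration).} Take $\bsig^*$ to be the minimizer of $H$. Standard Gaussian comparison with the REM gives $-H(\bsig^*) \geq (1-\epsilon_p) \sqrt{2\ln 2}\, N$ with high probability for large $p$. Alternatively, choose $\bsig^*$ with energy near $-\sqrt{2\ln2}\,N$ in a way that depends only on the field, so that the conditioning in Step 2 stays clean.

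\textbf{Step 2 (Gaussian decomposition).} Condition on $H(\bsig^*)$. For each $\bsig\in S$ write $H(\bsig) = c_p(\delta) H(\bsig^*) + \tilde H(\bsig)$, where $\tilde H$ is Gaussian, independent of $H(\bsig^*)$, and has variance $N(1-c_p(\delta)^2)$. With $c_p(\delta) = (1-\delta/2)^p$, choose $\delta$ of order $\ln p / p$ times a constant, so that $c_p(\delta)$ is small but the sphere is not too large: $|S| \le e^{N h(\delta)}$ with $h(\delta)\approx \delta\ln(1/\delta) \sim (\ln p)^2/p$.

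The sphere sum is then controlled by a first-moment (annealed) bound on $\sum_{\bsig\in S} e^{-\beta \tilde H(\bsig)}$:
\[
\mathbb E \sum_{\bsig\in S} e^{-\beta \tilde H(\bsig)} \leq \exp\!\bigl(N[h(\delta) + \beta^2/2]\bigr).
\]
This can be improved by truncation, since the maximum of $-\tilde H$ over $S$ is at most $N\sqrt{2h(\delta)}$ whp. That gives the bound $\exp(N[\beta\sqrt{2h(\delta)}])$ when $\beta$ exceeds $\sqrt{2h(\delta)}$. Combining the pieces, the log of the ratio of the sphere sum to the weight of $\bsig^*$ is at most
\[
N\Bigl[\, h(\delta) + \beta\sqrt{2h(\delta)} - \beta(1-c_p(\delta))(1-\epsilon_p)\sqrt{2\ln 2} \,\Bigr] .
\]
This is negative, of order $-\beta N$, as soon as $h(\delta)\ll\beta$ and $\sqrt{h(\delta)}$ is small, that is, for $\beta > C(\ln p)^2/p$.

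\textbf{Main obstacle.} The difficulty is sharpening this to $\beta > C\ln p/p$. The crude sphere sum wastes entropy, so I would instead take $\delta \sim 1/p$ times a constant, which makes $c_p(\delta)$ a constant below one. Then $h(\delta)\sim \ln p/p$, and the needed inequality $h(\delta) < \beta\,\cdot$const becomes exactly $\beta > C\ln p/p$. The truncated bound on the $\tilde H$ part has to be handled more carefully. I would split $S$ according to the value of $-\tilde H$ and use the Gaussian tail $|S|e^{-x^2/(2N)}$ for levels above $\sqrt{2h}\,N$, which whp contribute nothing. Levels below contribute at most $e^{N(h+\beta x/N)}$ with $x\le \sqrt{2h}N$, and $\beta\sqrt{2h}$ is much smaller than $\beta$. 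Finally, the union over the random choice of $\bsig^*$ has to be taken care of. The cleanest way is to apply the estimate simultaneously to all $\bsig$ with $-H(\bsig)\ge (1-\epsilon)\sqrt{2\ln2}N$ via a first-moment count over such pairs, in which the conditional structure of Step 2 enters through the joint Gaussian density of $(H(\bsig),H(\btau))$.
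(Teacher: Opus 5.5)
The overall strategy is the paper's: a bottleneck on a Hamming ball of radius of order $1/p$ around an $\varepsilon$-deep configuration, the decomposition $H(\btau)=c_p(\delta)H(\bsig^*)+\tilde H(\btau)$, the crude bound $|S|\,e^{\beta\max_S(-\tilde H)}$ on the sphere, and a first-moment count over pairs. There is, however, a genuine gap in the step $\pi(A)\le 1/2$.

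\textbf{The gap.} Your symmetry argument only works for even $p$. For odd $p$ one has $H(-\bsig)=-H(\bsig)$, so $-\bsig^*$ is the global \emph{maximizer} of $H$. The bottleneck estimate fails for the ball around it: its sphere outweighs its center by a factor of order $e^{\beta(1-c_p(\delta))\beta_c N}$. Passing to the complement does not rescue this either. Stationarity gives $Q(A^c,A)=Q(A,A^c)\le e^{-cN}\pi(\bsig^*)$, but you have no lower bound on $\pi(A^c)$. For large $\beta$, nothing in your argument excludes that the Gibbs measure sits almost entirely inside $B_{\delta N}(\bsig^*)$.

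\textbf{What is missing.} Any repair needs a second deep configuration far from the first. With probability tending to one, there must be two elements of $\mathcal{L}_\varepsilon$ at distance more than $2\delta N$. Since the union-bound estimate covers every center in $\mathcal{L}_\varepsilon$, whichever of the two disjoint balls has mass at most $1/2$ is then the bottleneck.

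\textbf{How the paper fills it.} The paper shows $|\mathcal{L}_\varepsilon|>|B_{2r}|$. The first moment $\mathbb{E}|\mathcal{L}_\varepsilon|\approx e^{N\beta_c^2\varepsilon(2-\varepsilon)/2}$ beats $|B_{2r}|\le(2rN+1)e^{2N\gamma(r)}$ once $\gamma(r)\le\beta_c^2\varepsilon/4$. Paley--Zygmund together with the second-moment estimate of \cite{GJK25} (Proposition~\ref{prop_concentration}, valid for $p\ge p(\varepsilon)$) upgrades this to a high-probability statement (Corollary~\ref{cor:P1}).

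A Sudakov--Fernique comparison on two disjoint halves of a code whose pairwise distances lie in $[\eta N,(1-\eta)N]$, with $\eta p$ large, would be another way to obtain two separated deep configurations. Such a restricted comparison, or a second moment, is also what your Step~1 lower bound actually needs: comparison with the full REM only bounds the maximum of $-H$ from above.

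\textbf{A quantitative point on the final union bound.} Your threshold $\max_S(-\tilde H)\le N\sqrt{2h(\delta)}$ holds only for a fixed center. It does not hold for the minimizer, whose sphere is not independent of how it was selected. Summing over centers in $\mathcal{L}_\varepsilon$ adds the entropy $\beta_c^2\varepsilon(2-\varepsilon)/2$. The admissible threshold then becomes roughly $N\sqrt{(1-c_p(\delta)^2)(2h(\delta)+\beta_c^2\varepsilon(2-\varepsilon))}$. This stays below $\beta_c(1-\varepsilon)(1-c_p(\delta))N$ only if $\varepsilon$ is small compared with $(1-c_p(\delta))/(1+c_p(\delta))$.

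This is exactly what the paper's choice $r=\arctanh(x_{\varepsilon,\delta})/p$ with $x_{\varepsilon,\delta}\propto\varepsilon$ enforces; there $\delta$ is a slack parameter, not the radius. It is compatible with your parameter order: fix $c_p<1$ through a radius of order $\kappa/p$, then choose $\varepsilon$, then $p$. But the fluctuation term becomes negligible mainly because $\varepsilon$ is small relative to $(1-c_p)/(1+c_p)$, not merely because $h\to 0$.
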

The proof is spelled out in Section~\ref{sec:proof}.
\subsection{Discussion}
For Markov chains with a positive transition matrix, e.g.\ Glauber dynamics \cite{DGU14}, the mixing time is linked to the spectral gap $1- \lambda_2 $ between the largest eigenvalue $ 1 $ of $ P $ and its second largest $ \lambda_2 $ via the estimates~\cite[Theorems 12.4 \& 12.5]{LY17}
\begin{equation}\label{eq_spectral_gap_mixing_bound}
\ln (2)\left(\frac{1}{1- \lambda_2}-1\right) \leq t_{\textrm{mix}}  \leq \frac{\ln 4 - \ln \min_{\bsig} \pi(\bsig)}{1- \lambda_2} .
\end{equation}
Since $\ln \min_{\bsig} \pi(\bsig) \geq C N$, the above result shows that with asymptotically full probability, the spectral gap is exponentially small. \\

\noindent
The Ising $p$-spin glass is expected to exhibit the following sequence of dynamical and static phase transitions, summarized in Figure~\ref{fig:beta-phases}.
\begin{figure}[H]
  \centering
  \begin{tikzpicture}[>=stealth]
    \draw[->] (0,0) -- (16,0) node[right] {$\; \beta$};

    \foreach \x/\lab in {
        0/0,
        3/$\beta_{\text{sl}}(p)$,
        8/$\beta_{\text{sh}}(p)$,
        12/$\beta_{\text{st}}(p)$
    }{
      \draw (\x,0) -- (\x,-0.3) node[below] {\lab};
    }

    \node[below=0.7cm, align=center] at (1.5,0)
      {\emph{fast mixing}};

    \node[above=0.5cm, align=center] at (5.5,0)
      {\emph{slow mixing from}\\\emph{worst-case initialization}};

    \node[below=0.7cm, align=center] at (5.5,0)
      {\emph{fast mixing from}\\\emph{random initialization}};

    \node[above=0.5cm, align=center] at (10,0)
      {\emph{slow mixing from}\\\emph{random initialization}};

    \node[above=0.5cm, align=center] at (14,0)
      {\emph{spin glass phase}};
\end{tikzpicture}
  \caption{Conjectured hierarchy of transition temperatures for the Ising $p$-spin glass. Theorem~\ref{thm:main} establishes the upper bound  $ \beta_{\text{sl}}(p) \leq C (\ln p )/p $.}
  \label{fig:beta-phases}
\end{figure}

\bigskip 
\noindent
The model undergoes a static spin-glass transition at the inverse temperature $\beta_{\mathrm{st}}(p)$. For $\beta>\beta_{\mathrm{st}}(p)$, replica symmetry breaking (RSB) occurs. For large $p$,
\[
    \beta_{\mathrm{st}}(p)=\beta_c\bigl(1-o_p(1)\bigr),\qquad
    \beta_c\coloneqq\sqrt{2\ln 2},
\]
where $o_p(1)\to0$ as $p\to\infty$ \cite{Bov06,G85,Z24}.
Gardner \cite{G85} predicted a further transition for $p\ge 3$ at a lower temperature, where the system passes from a one-step RSB phase (recently established in \cite{Z24}) to a full RSB phase. For even $p\geq 4$, it was shown in \cite{BJ18} that the Glauber dynamics mix exponentially slowly throughout the spin-glass regime.

\bigskip
\noindent
For $p\geq 3$, the Ising $p$-spin glass is expected to undergo a first dynamical transition at $\beta_{\mathrm{sh}}(p)$. For large $p$,
\[
    \beta_{\mathrm{sh}}(p)=\sqrt{\frac{2\ln p}{p}}\,\bigl(1+o_p(1)\bigr),
\]
see \cite{KT87,FLPR12,MR03}; a short derivation of the above expression appears in \cite[Appendix~A]{AMS25_sampling}.
For inverse temperatures $\beta\in\bigl(\beta_{\mathrm{sh}}(p),\beta_{\mathrm{st}}(p)\bigr),$
the Gibbs measure is predicted to be \emph{shattered}: it concentrates on exponentially many clusters, each carrying an exponentially small fraction of the total mass.
This geometry implies exponential slowdown for the Glauber dynamics from worst initialization, as the clusters form a natural bottleneck for the dynamics. Even more, it is widely believed that the Glauber dynamics from a random initialization mix exponentially slowly throughout this regime, but a proof remains open.

The existence of the shattering phase for large $p$ was first proven rigorously by Gamarnik, Jagannath, and Kızıldağ, and later refined (near-optimally) by El Alaoui \cite{Al25}, matching the physics predictions \cite{FLPR12,MR03} asymptotically.
Slow mixing of Glauber dynamics and shattering has also been established for mixed Ising spin glasses in \cite{BJ18,AAS25}.

The model is believed to undergo a second dynamical transition at some  $\beta_{\mathrm{sl}}(p)$: Glauber dynamics still mix slowly from worst-case initial data. From random initialization, the dynamics, however, already mix efficiently in this regime. Theorem~\ref{thm:main} addresses the exponentially slow mixing beyond the shattering regime, and establishes an explicit upper bound on the breakdown of fast mixing:
\[
\beta_{\mathrm{sl}}(p)\leq \frac{C\ln p}{p}
\]
for some constant $C>0$. This should be contrasted with known regimes of fast mixing at sufficiently high temperatures: in \cite{ABX22}, a spectral gap of order $1/N$ was proven for $\beta<C/\sqrt{p^3\ln p}$. In this regime, the standard estimate \eqref{eq_spectral_gap_mixing_bound}implies $t_{\mathrm{mix}}=O(N^2)$. This was improved in \cite{AJK23}, which establishes a modified log-Sobolev inequality (MLSI) with constant of order $1/N$ in the same temperature regime, giving $t_{\mathrm{mix}}=O(N\ln N)$. 
Determining the precise value of $\beta_{\mathrm{sl}}(p)$ or, showing that the Glauber dynamics from random initialization mix efficiently for $\beta<\beta_{\mathrm{sh}}(p)$ and slowly for $\beta>\beta_{\mathrm{sh}}(p)$, remains an open problem.

\bigskip
\noindent
\textbf{Related models and other sampling dynamics.}
For the Sherrington--Kirkpatrick (SK) model ($p=2$), fast mixing was proven for $\beta<\beta_{\mathrm{st}}(2)/4$ by establishing a spectral gap \cite{EKZ22}, by building on the breakthrough \cite{BB19}. In the same regime, \cite{AnEA21} obtained an optimal mixing-time bound $O(N\ln N)$ by establishing an MLSI. More recently, \cite{AKV24} extended the fast-mixing regime to all $\beta<\beta_{\mathrm{st}}(2)/4+c$, where $c>0$ is a small absolute constant. It is conjectured that fast mixing holds throughout the entire replica-symmetric phase, i.e., for all $\beta<\beta_{\mathrm{st}}(2)$. 
At sufficiently low temperatures, Sellke recently showed slow mixing \cite{S25}.\\

Beyond Glauber dynamics, several recent works \cite{AMS25_sampling,DLSS26,HMP24,HMRW24} analyze alternative sampling algorithms and prove efficient sampling of the Gibbs measure in temperature regimes that extend beyond the best known for Glauber dynamics.\\

The Langevin dynamics of the spherical $p$-spin glass exhibits behavior closely analogous to that of the Ising $p$-spin glass with transition temperatures with different values, but lined up as in Figure~\ref{fig:beta-phases}. The physical picture underlying the dynamical transition was first described in \cite{CHS93,CK93}. Gheissari and Jagannath \cite{GJ19} proved the existence of a spectral gap at sufficiently high temperature, and an exponentially small spectral gap at sufficiently low temperature. Slow mixing in, and beyond, the spin-glass phase was established by Ben Arous and Jagannath \cite{BJ18,BJ24} for $p \geq 4$. More recently, the emergence of shattering was proven in \cite{AMS25_spherical} for large $p$.
Closely related to the present work is  \cite[Theorem~2.6]{BJ24}, which shows that, for $\beta > T_{\mathrm{BBM}}^{-1}(p) =\frac{e}{\sqrt{2\ln (p)}}(1+o_p(1))$, the Langevin dynamics mixes exponentially slowly even in the absence of shattering, as predicted in \cite{BBM96}; see~\cite[Equation A.37]{BBM96}.

\section{Proof}\label{sec:proof}
This paper was inspired by the recent work \cite{S25} by Sellke, which concerns the analogue of Theorem~\ref{thm:main} for $ p = 2 $. Its proof \cite{S25} relies on techniques \cite{DGZ25,HS25}, which are not (yet) available for general $p$. We circumvent this by exploiting a characterization of the energy landscape of the $ p$-spin glass directly using  Gaussian decompositions. The latter control correlations effectively for large $ p $. Such techniques have previously been used in the context of quantum spin glasses~\cite{MW20,MW21b,MW25} in \cite{kmw26}.  
\subsection{Deterministic bottleneck bound}

The proof is based on the Jerrum-Sinclair-Diaconis-Stroock bottleneck bound \cite[Theorem 7.4]{LY17}:
\begin{equation}\label{eq:JSDS}
    \frac{1}{4\  t_{\textrm{mix}} } \leq \min_{\substack{A \subset \{ -1,1\}^N \\ \pi(A) \leq 2^{-1}}} \frac{1}{\pi(A)} \sum_{\substack{\bsig \in A \\ \btau \not\in A}} \pi(\bsig) P(\bsig,\btau)  
\end{equation}
In our situation, we will work with balls and their surfaces
$$
B_r(\bsig_0) \coloneqq \left\{ \bsig \ | \ d(\bsig , \bsig_0) \leq r N \right\} , \qquad S_r(\bsig_0) \coloneqq \left\{ \bsig \ | \ d(\bsig , \bsig_0) =  r N \right\} 
$$
of radius $ r \in (0,2^{-1}) $. 
Provided $ \bsig_0 $ is chosen such that $ \pi(B_r(\bsig_0) ) \leq 2^{-1}$, we may further estimate the right side of \eqref{eq:JSDS} using~\eqref{eq:walk} and the normalization $ 1 = \sum_{\btau} P(\bsig,\btau) $:
\begin{align}
    \frac{1}{\pi(B_r(\bsig_0))} \sum_{\substack{\bsig \in B_r(\bsig_0) \\ \btau \not\in B_r(\bsig_0)}} \pi(\bsig) P(\bsig,\btau) &=\frac{1}{\pi(B_r(\bsig_0))} \sum_{\substack{\bsig \in S_r(\bsig_0) \\ \btau \not\in B_r(\bsig_0)}} \pi(\bsig) P(\bsig,\btau)  \leq  \frac{\pi(S_r(\bsig_0))}{\pi(B_r(\bsig_0))}  \notag \\
    & \leq \left| S_r(\bsig_0)\right| \ \exp\left(\beta \left[ H(\bsig_0) - \min_{\btau \in S_r(\bsig_0)} H(\btau) \right] \right) \notag \\
    & \leq \exp\left( N \gamma(r) + \beta \left( H(\bsig_0) (1- c_p(r) ) - \min_{\btau \in S_r(\bsig_0)} G_{\bsig_0}(\btau) \right] \right) . \label{eq:JSDS2}
\end{align} 
Here and in the following $ | \cdot | $ denotes the cardinality of a set, such that by the standard bound on the binomial coefficient in terms of the binary entropy function $\gamma $:
\begin{equation}\label{eq:binom}
  \left| S_r(\bsig_0)\right| = \binom{N}{rN} \leq e^{ N \gamma(r) } , \qquad \gamma(r) \coloneqq - r \ln r - (1-r) \ln(1-r) .
\end{equation}
A key ingredient in the last line of~\eqref{eq:JSDS2} was the Gaussian decomposition:
\begin{equation}\label{eq:Gdec}
   G_{\bsig_0}(\btau) \coloneqq  H(\btau) -  H(\bsig_0) \ c_p\left( \frac{d(\bsig_0,\btau)}{N}\right)  . 
\end{equation}
To exploit the deterministic bound~\eqref{eq:JSDS2}, we 
will pick $ \bsig_0$ from the set of large negative fluctuations,
\begin{equation}\label{def:L}
    \mathcal{L}_{\varepsilon} \coloneq \left\{ \bsig \ | \ - H(\bsig) > \beta_c (1- \varepsilon) N \right\} , 
\end{equation}
with $  \varepsilon \in (0,1) $. 
Our deterministic bound will hold on the following event.
\begin{defn}\label{def:G}
For $ \varepsilon , \delta  \in (0,1)$ and $ r \in [N^{-1}, 2^{-1}) $, we define the event $ \mathcal{G}_{\varepsilon , \delta}(r) $ by requiring:
\begin{enumerate}
    \item $ | \mathcal{L}_\varepsilon | > | B_{2r} | $, with the latter the volume of any ball with radius $ 2 r N $, and
    \item\label{p2} For all $ \bsig_0 \in \mathcal{L}_\varepsilon  $: \quad $ \displaystyle \min_{\btau \in S_r(\bsig_0)} G_{\bsig_0}(\btau) \geq -N \beta_c (1-\varepsilon) \delta (1-c_p(r) ) $. 
\end{enumerate}
\end{defn}
The following summarizes our deterministic bound, and shows that for $ \beta $ large enough on the above event, the mixing time is exponentially large. 
\begin{lemma}\label{lem:bottlesum}
On the event $ \mathcal{G}_{\varepsilon , \delta}(r) $ there is some $ \bsig_0 \in \mathcal{L}_\varepsilon $ such that $ \pi\left( B_r(\bsig_0) \right)\leq 2^{-1} $, and one has:
\begin{equation}\label{eq:JSDSfinal}
 4 \  t_{\textrm{mix}} \geq \exp\left( N \left[ \beta  \beta_c (1-\varepsilon) (1-\delta)(1-c_p(r) )  - \gamma(r) \right]\right) . 
\end{equation}  
\end{lemma}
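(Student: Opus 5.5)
The argument has two parts. First, condition~1 of $\mathcal{G}_{\varepsilon,\delta}(r)$ yields a center $\bsig_0\in\mathcal{L}_\varepsilon$ with $\pi(B_r(\bsig_0))\le 2^{-1}$. Then the chain \eqref{eq:JSDS}--\eqref{eq:JSDS2} is evaluated at that center, with the minimum of $G_{\bsig_0}$ on the sphere controlled by condition~2.

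\emph{Choice of $\bsig_0$.} We argue by contradiction and suppose $\pi(B_r(\bsig))>2^{-1}$ for every $\bsig\in\mathcal{L}_\varepsilon$.

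\begin{itemize}
\item Two balls of radius $rN$ with total mass exceeding $1$ cannot be disjoint. Hence every pair $\bsig,\bsig'\in\mathcal{L}_\varepsilon$ has $B_r(\bsig)\cap B_r(\bsig')\neq\emptyset$.
\item By the triangle inequality for the Hamming distance, this gives $d(\bsig,\bsig')\le 2rN$.
\item Fix any $\bsig_1\in\mathcal{L}_\varepsilon$. Then $\mathcal{L}_\varepsilon\subset B_{2r}(\bsig_1)$, so $|\mathcal{L}_\varepsilon|\le |B_{2r}|$.
\item This contradicts condition~1, which requires $|\mathcal{L}_\varepsilon|>|B_{2r}|$ and in particular $\mathcal{L}_\varepsilon\neq\emptyset$.
\end{itemize}

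So some $\bsig_0\in\mathcal{L}_\varepsilon$ has $\pi(B_r(\bsig_0))\le 2^{-1}$, and $A=B_r(\bsig_0)$ is admissible in \eqref{eq:JSDS}.

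\emph{Evaluating the bound.} Inserting this $A$ into \eqref{eq:JSDS} and using \eqref{eq:JSDS2}, it remains to bound the exponent. Strictly, the second line of \eqref{eq:JSDS2} uses $\pi(B_r(\bsig_0))\ge\pi(\bsig_0)$, and the decomposition \eqref{eq:Gdec} on $S_r(\bsig_0)$ gives $H(\btau)=c_p(r)H(\bsig_0)+G_{\bsig_0}(\btau)$, which produces the last line.

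\begin{itemize}
\item Since $\bsig_0\in\mathcal{L}_\varepsilon$ and $1-c_p(r)\ge 0$ (because $c_p(r)\le 1$), we have
\[
H(\bsig_0)(1-c_p(r))\le -\beta_c(1-\varepsilon)N(1-c_p(r)).
\]
\item Condition~2 gives
\[
-\min_{\btau\in S_r(\bsig_0)}G_{\bsig_0}(\btau)\le N\beta_c(1-\varepsilon)\delta(1-c_p(r)).
\]
\item Adding these two bounds, the exponent is at most
\[
N\gamma(r)-\beta\beta_c(1-\varepsilon)(1-\delta)(1-c_p(r))N .
\]
\item The first step of \eqref{eq:JSDS2} uses that only boundary points of $B_r(\bsig_0)$ can leave it in one step, which holds by \eqref{eq:walk}.
\item Rearranging $1/(4t_{\mathrm{mix}})\le\exp(\dots)$ gives \eqref{eq:JSDSfinal}.
\end{itemize}

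If $rN$ is not an integer, the sphere $S_r$ is empty or must be interpreted with $\lfloor rN\rfloor$; the same argument applies with the outer layer of the ball.

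The only nonroutine step is the pigeonhole/triangle-inequality argument for choosing $\bsig_0$. The rest is bookkeeping of signs, in particular keeping $\beta>0$ and $1-c_p(r)\ge0$.
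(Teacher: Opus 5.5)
Your proof is correct and takes essentially the same route as the paper. The paper uses condition~1 to find two points of $\mathcal{L}_\varepsilon$ with disjoint $r$-balls, so one ball has mass at most $1/2$; your contradiction argument via the triangle inequality spells out that step, which the paper leaves implicit. You then combine $\bsig_0\in\mathcal{L}_\varepsilon$ and condition~2 in \eqref{eq:JSDS}--\eqref{eq:JSDS2} exactly as the paper does, with the sign bookkeeping handled correctly.
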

\begin{proof}
Since $| \mathcal{L}_\varepsilon | > |B_{2r}| \geq 2 $, there are two distinct $ \bsig_0 , \btau_0 \in \mathcal{L}_\varepsilon $ such that $ B_r(\bsig_0) \cap B_r(\btau_0) = \emptyset$. Hence
$$
\pi(B_r(\bsig_0) ) + \pi(B_r(\btau_0)) \leq 1 ,
$$
from which we conclude that at least one of the two terms is smaller than $ 1/2$. 

The bound~\eqref{eq:JSDSfinal} is then an immediate consequence of \eqref{eq:JSDS}, \eqref{eq:JSDS2} and the characterizing property~\ref{p2} of the event $\mathcal{G}_{\varepsilon , \delta}(r) $. 
\end{proof}

\subsection{Probabilistic bounds}

 For a fixed $ \bsig \in \{ -1 , 1\}^N $, the probability of an  event of the form featuring in~\eqref{def:L} is given by the error integral 
$$ \mathbb{P}(- H(\bsig) > u \sqrt{N} ) = \int_{u  }^\infty  e^{-x^2/2} \frac{dx}{\sqrt{2\pi}} =: \Phi(u )  ,
$$
for which we will frequently use the following standard estimates
\begin{equation}\label{eq:errorint}
\frac{1}{\sqrt{2\pi} u } \left(1- u^{-2}\right) e^{- u^2/2} \leq \Phi(u) \leq \frac{1}{\sqrt{2\pi} u } e^{- u^2/2}
\end{equation}
 valid for any $ u > 0 $. The estimate in particular implies that on average, the set $ \mathcal{L}_\varepsilon $ is exponentially large, i.e., for any $ \varepsilon \in (0,1) $:
\begin{equation}\label{eq:Llower}
\mathbb{E}\left[| \mathcal{L}_{\varepsilon}  | \right] = 2^N \Phi\left(\sqrt{N} \beta_c (1-\varepsilon)\right) \geq \frac{1}{\sqrt{2\pi N} \beta_c (1-\varepsilon)} \left( 1 - \frac{1}{\beta_c^2 (1-\varepsilon)^2 N} \right) \exp\left(N \frac{\beta_c^2}{2} \varepsilon (2 - \varepsilon) \right) 
\end{equation}
More importantly, 
\cite{GJK25} showed that this expectation is faithful via a standard second moment method.
\begin{prop}[\text{\cite[Proposition 2.1]{GJK25}}]\label{prop_concentration}
For any $ \varepsilon \in (0,1) $ there is some $ p(\varepsilon) \in \mathbb{N} $ such that for all $ p \geq p(\varepsilon)$:
\begin{equation}
    \lim_{N\to \infty} \frac{\mathbb{E}\left[| \mathcal{L}_{\varepsilon}  | \right]^2 }{\mathbb{E}\left[| \mathcal{L}_{\varepsilon}  |^2 \right]} = 1 . 
\end{equation}
\end{prop}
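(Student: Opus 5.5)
We prove the statement by the second moment method: compute $\mathbb{E}[|\mathcal{L}_\varepsilon|^2]$ as a sum over pairs, group pairs by Hamming distance, and show that pairs at macroscopic distance dominate and contribute asymptotically $\mathbb{E}[|\mathcal{L}_\varepsilon|]^2$. Set $u \coloneqq \beta_c(1-\varepsilon)\sqrt{N}$. Then
\begin{equation*}
\mathbb{E}\left[|\mathcal{L}_\varepsilon|^2\right] = 2^N \sum_{k=0}^N \binom{N}{k}\, \mathbb{P}\left(X > u,\ Y>u\right),
\end{equation*}
where $(X,Y)$ is a standard bivariate Gaussian pair with correlation $\rho_k = c_p(k/N) = (1-k/N)^p$. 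By Cauchy--Schwarz the ratio in the statement is at most $1$, so it suffices to show that the normalized sum $2^{-N}\sum_k \binom{N}{k}\mathbb{P}(X>u,Y>u)/\Phi(u)^2$ tends to $1$.

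\textbf{Far pairs, $k \geq \eta N$.} Here $\rho_k \le (1-\eta)^p$, which is exponentially small in $p$.
1. Write $X=\rho Y+\sqrt{1-\rho^2}\,Z$ and use the error function bounds \eqref{eq:errorint}.
2. This gives $\mathbb{P}(X>u,Y>u) \le \Phi(u)^2 \exp\left(\rho u^2 (1+o(1))\right)$, via the standard bivariate tail asymptotic $\exp\left(-u^2/(1+\rho)\right)$.
3. For the bulk, where $|k/N-1/2|$ is small, we have $\rho_k \le 2^{-p}\cdot$const, so $\rho_k u^2$ is not small when $u^2 \sim N$. A cruder bound is therefore not enough, and we need a more refined comparison.
4. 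Expanding $1/(1+\rho) = 1-\rho+O(\rho^2)$ and combining with the entropy factor gives the exponent
\begin{equation*}
N\left[\gamma(k/N) - \ln 2 + \beta_c^2(1-\varepsilon)^2 \rho_k + O(\rho_k^2)\right].
\end{equation*}
5. Near $k=N/2$ we have $\gamma - \ln 2 \approx -2(k/N-1/2)^2$ and $\rho_k \approx 2^{-p}e^{-p(2k/N-1)}$. For $k/N$ within $O(1/\sqrt{N})$ of $1/2$, the quantity $N\rho_k \sim N2^{-p}$ is not $o(1)$ at fixed $p$.
6. So the bulk ratio is not exactly $1$ pointwise. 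We instead sum the Gaussian local CLT, $\sum_k 2^{-N}\binom{N}{k} e^{\rho_k u^2}$. Since $\rho_k$ is essentially $2^{-p}e^{-2p s}$ with $s = k/N - 1/2$, and $e^{\rho u^2}$ is controlled by the hypercontractive-type moment $\mathbb{E}_{\tau}\left[e^{u^2 (\bsig\cdot\btau/N)^p}\right]$, the task is to show this tends to $1$.
7. The key reduction uses the overlap $q = \bsig\cdot\btau/N$, which is $O(N^{-1/2})$ typically. Then $q^p u^2 \sim N^{1-p/2}\to 0$ for $p\ge 3$. So typical overlaps contribute a factor $1+o(1)$. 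The apparent problem in step 5 came from my confused parametrization; the correct correlation is $q^p$ for even $p$ when using overlap, consistent with \eqref{eq:covariance} in terms of $c_p$.

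\textbf{Atypical overlaps, $|q|\ge N^{-1/3}$.} Here the entropy cost is $\exp(-Nq^2/2)$ and the gain is at most $\exp(N\beta_c^2 |q|^p)$. For $|q|$ at most a small constant $q_0$, the entropy cost wins once $p \geq 3$.

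\textbf{Large overlaps, $|q|\ge q_0$, including the near-diagonal range $k\le \eta N$.} Bound the pair probability by
\begin{equation*}
\exp\left(-u^2/(1+|q|^p)\right),
\end{equation*}
and compare $\gamma$-entropy plus $\beta_c^2(1-\varepsilon)^2/(1+|q|^p)$ against $2\ln 2 - \beta_c^2(1-\varepsilon)^2$. At $q=\pm1$ the ratio is $\mathbb{E}|\mathcal{L}|/\mathbb{E}[|\mathcal{L}|]^2 \to 0$ by \eqref{eq:Llower}. For $q_0 \le |q| < 1$, large $p$ makes $|q|^p$ tiny unless $1-|q| \lesssim (\ln p)/p$. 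In that window the entropy is $\gamma(r)\approx r\ln(1/r)$ with $r \lesssim (\ln p)/p$, which is small. The exponent is then dominated by $-\beta_c^2(1-\varepsilon)^2\left(1-\frac{1}{1+c_p}\right)\cdot\ldots$, and we need it to be strictly negative uniformly.

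\textbf{Main obstacle.} This last window is the hard step. There the deficit $\ln 2\,[\,2\varepsilon - \varepsilon^2\,]$-type gain from \eqref{eq:Llower} must beat $\gamma(r)$ plus the correlation gain. I would verify it by choosing $p(\varepsilon)$ large, so that for $r \ge C_\varepsilon/p$ the correlation $c_p(r) \le e^{-pr/2}$ is negligible, while for $r < C_\varepsilon/p$ the entropy $\gamma(r)\to 0$ as $p\to\infty$ and the exponent is close to the $q=1$ value, which is negative by $\frac{\beta_c^2}{2}\varepsilon(2-\varepsilon)$.
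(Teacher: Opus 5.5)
Once you discard steps 3--6, your argument is correct and is the standard second-moment overlap decomposition behind the result the paper imports from \cite{GJK25}. Those steps rest on the wrong correlation $(1-k/N)^p$; at Hamming distance $k$ the correlation is $q^p$ with $q=1-2k/N$ for every $p$, and for odd $p$ with $q<0$ it is negative, which only lowers the pair probability by Slepian's inequality. The decomposition itself is right: a typical window where $|q|^p u^2\to 0$, an entropy-dominated window $N^{-1/3}\le |q|\le q_0$, and a near-diagonal window where the condition $\gamma(C/p)<\ln 2\,\varepsilon(2-\varepsilon)$ forces $p(\varepsilon)\gtrsim \varepsilon^{-1}\ln(1/\varepsilon)$, in line with the paper's remark.

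In a write-up, fix three points. First, the window $|q|\le N^{-1/3}$ needs $p>3$ rather than $p\ge 3$. Second, in that window you need the prefactor-free bound $\mathbb{P}(X>u,Y>u)\le\Phi(u)^2\bigl(1+O(|\rho|u^2)\bigr)$, not just the rate $e^{-u^2/(1+\rho)}$; this follows, e.g.\ from $\partial_\rho\mathbb{P}(X>u,Y>u)=(2\pi)^{-1}(1-\rho^2)^{-1/2}e^{-u^2/(1+\rho)}$ together with \eqref{eq:errorint}. Third, in the large-overlap window the exponent to be made uniformly negative is $\gamma(r)-\ln 2+\beta_c^2(1-\varepsilon)^2\frac{\rho}{1+\rho}$.
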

\begin{rem}
In the argument of \cite{GJK25}, it is necessary to choose the parameter $p(\varepsilon) > C\varepsilon^{-1}$ for some constant $C>0$.
\end{rem}
An immediate corollary is the following bound on the probability of the first event in Definition~\ref{def:G}.
\begin{cor}\label{cor:P1}
    For any $ \varepsilon \in (0,1) $ and $ r \in (0,4^{-1}) $ such that
    $
        \gamma(r) \leq 4^{-1} \beta_c^2  \varepsilon $, 
    there is some $ c_{\varepsilon}(r) > 0 $ and $ N_\varepsilon(r) \in \mathbb{N} $ such that for all $ N \geq N_\varepsilon(r) $ 
    \begin{equation}\label{eq:ratio}
    \theta_\varepsilon(r) \coloneqq \frac{|B_{2r}|}{\mathbb{E}\left[| \mathcal{L}_{\varepsilon}  | \right] } \leq e^{- c_{\varepsilon}(r) N } .
    \end{equation}
    Moreover, one also has
    \begin{equation}\label{eq:remprob}
        \mathbb{P}\left( |\mathcal{L}_\varepsilon | > |B_{2r}| \right) \geq \left(1-\theta_\varepsilon(r)\right)^2 \ \frac{\mathbb{E}\left[| \mathcal{L}_{\varepsilon}  | \right]^2 }{\mathbb{E}\left[| \mathcal{L}_{\varepsilon}  |^2 \right]} . 
    \end{equation}
\end{cor}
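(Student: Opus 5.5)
The corollary has two parts: an exponential bound on $\theta_\varepsilon(r)$, and a Paley--Zygmund-type lower bound on $\mathbb{P}(|\mathcal{L}_\varepsilon| > |B_{2r}|)$.

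\emph{Bound on $\theta_\varepsilon(r)$.} For the numerator we use the standard volume bound for Hamming balls of radius $2rN$ with $2r<1/2$:
\[
|B_{2r}| = \sum_{k\le 2rN}\binom{N}{k} \le e^{N\gamma(2r)} .
\]
One could alternatively bound this by $(2rN+1)e^{N\gamma(2r)}$, using the monotonicity of $\binom{N}{k}$ for $k\le N/2$; a polynomial prefactor is harmless. Since $\gamma$ is concave with $\gamma(0)=0$, it is subadditive on $[0,1/2]$, so $\gamma(2r)\le 2\gamma(r)\le \beta_c^2\varepsilon/2$.

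For the denominator we use \eqref{eq:Llower}. It gives $\mathbb{E}|\mathcal{L}_\varepsilon| \ge \frac{c}{\sqrt N}\exp\bigl(N\tfrac{\beta_c^2}{2}\varepsilon(2-\varepsilon)\bigr)$ for $N$ large. Combining,
\[
\theta_\varepsilon(r)\le C\sqrt N\,(2rN+1)\exp\Bigl(-N\tfrac{\beta_c^2}{2}\varepsilon\,(1-\varepsilon)\Bigr),
\]
because $\varepsilon(2-\varepsilon)-\varepsilon=\varepsilon(1-\varepsilon)>0$. We then choose $c_\varepsilon(r)$ to be, say, half of $\frac{\beta_c^2}{2}\varepsilon(1-\varepsilon)$. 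Taking $N_\varepsilon(r)$ large absorbs the polynomial factors and the $(1-1/(\beta_c^2(1-\varepsilon)^2N))$ correction.

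\emph{Probability bound.} Apply Paley--Zygmund to the nonnegative variable $X=|\mathcal{L}_\varepsilon|$ with threshold $|B_{2r}|=\theta\,\mathbb{E}X$, where $\theta\in[0,1)$. Write
\[
\mathbb{E}X = \mathbb{E}\bigl[X\mathbbm{1}_{X\le\theta\mathbb{E}X}\bigr]+\mathbb{E}\bigl[X\mathbbm{1}_{X>\theta\mathbb{E}X}\bigr] \le \theta\,\mathbb{E}X+\sqrt{\mathbb{E}X^2\,\mathbb{P}(X>\theta\mathbb{E}X)},
\]
where the second term is bounded by Cauchy--Schwarz. Rearranging gives
\[
\mathbb{P}(X>\theta\mathbb{E}X)\ge (1-\theta)^2\frac{(\mathbb{E}X)^2}{\mathbb{E}X^2},
\]
which is \eqref{eq:remprob}. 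This step needs $\theta<1$, which is guaranteed by the first part once $N\ge N_\varepsilon(r)$. For $\theta\ge1$ the inequality would still be stated, but it is only meaningful in our regime.

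The only mildly delicate point is the entropy comparison $\gamma(2r)\le2\gamma(r)$ for $r<1/4$. One could avoid it by using the monotonicity of $\gamma$ on $[0,1/2]$ together with the assumed smallness of $\gamma(r)$, but subadditivity from concavity is the cleanest route. Nothing else poses an obstacle.
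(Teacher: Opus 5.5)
Your proof is correct and follows the paper's route: the Hamming-ball volume bound, $\gamma(2r)\le 2\gamma(r)$ by concavity, the lower bound \eqref{eq:Llower}, and Paley--Zygmund (which you derive directly via Cauchy--Schwarz). You also make the explicit choice $c_\varepsilon(r)=\tfrac{\beta_c^2}{4}\varepsilon(1-\varepsilon)$, which is valid. Two of your asides are slightly off: for $\theta>1$ the Paley--Zygmund bound can actually fail, not merely be uninformative, so \eqref{eq:remprob} should only be asserted for $N\ge N_\varepsilon(r)$. And monotonicity of $\gamma$ alone gives $\gamma(2r)\ge\gamma(r)$, the wrong direction, so the concavity step is genuinely needed.
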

\begin{proof}
    For an estimate on the ratio in~\eqref{eq:ratio}, we use the monotonicity of the binomial coefficients for $ 2r \leq 2^{-1} $ and ~\eqref{eq:binom} to conclude
    $$
        |B_{2r}| = \sum_{j=0}^{2rN} \binom{N}{j} \leq (2rN+1) \binom{N}{2rN} \leq (2rN+1) e^{N\gamma(2r) } \leq (2rN+1) e^{2N \gamma(r)},
    $$
    where the last step is by concavity of $ \gamma $.
    The bound~\eqref{eq:ratio} hence follows from~\eqref{eq:Llower}, since $2\gamma(r)\leq 2^{-1}\beta_c^2\eps$

   Since $  \theta_\varepsilon(r) \in (0,1) $, the estimate~\eqref{eq:remprob} is simply the Paley-Zygmund inequality:
   $$
    \mathbb{P}\left( |\mathcal{L}_\varepsilon | > |B_{2r}| \right) = \mathbb{P}\left( |\mathcal{L}_\varepsilon | > \theta_\varepsilon(r) \ \mathbb{E}\left[| \mathcal{L}_{\varepsilon}  | \right]\right) \geq ( 1- \theta_\varepsilon(r) )^2 \  
    \frac{\mathbb{E}\left[| \mathcal{L}_{\varepsilon}  | \right]^2 }{\mathbb{E}\left[| \mathcal{L}_{\varepsilon}  |^2 \right]} , 
    $$
    which completes the proof.
\end{proof}

It remains to establish a bound on the second event in Definition~\ref{def:G}. This will heavily rely on the fact that for any  $ \bsig_0 $ and any $ \btau $ such that $ d(\bsig_0 , \btau ) = r N $ the random variables $ G_{\bsig_0}(\btau) =  H(\btau) -  H(\bsig_0) \ c_p\left( r\right) $ are Gaussian with mean zero and variance 
\begin{equation}
\mathbb{E}\left[ G_{\bsig_0} (\btau)^2\right] = N \left[  1 - c_p(r)^2 \right], \quad \mbox{if $ d(\bsig_0,\btau) = r N $.} 
\end{equation} 
Moreover, they are uncorrelated,
$
\mathbb{E}\left[ H(\bsig_0) \  G_{\bsig_0} (\btau)\right] = 0 
$, 
and hence independent of $ H(\bsig_0) $.
\begin{lemma}\label{lem:P2}
    For any $ \varepsilon, \delta \in (0,1) $ and $ r \in (0,2^{-1}) $:
    \begin{multline}\label{eq:P2}
        \mathbb{P}\left( \mbox{There is $ \bsig_0 \in \mathcal{L}_\varepsilon$}: \quad 
        \min_{\btau \in S_r(\bsig_0)} G_{\bsig_0}(\btau) < -N \beta_c (1-\varepsilon) \delta (1-c_p(r) ) \right) \\
        \leq \frac{\sqrt{1+c_p(r)}}{2\pi N \beta_c^2 (1-\varepsilon)^2 \delta \sqrt{1-c_p(r)}}\  \exp\left( - \frac{N}{2} \left[ \beta_c^2\left(  (1-\varepsilon)^2 \delta^2 \frac{1-c_p(r)}{1+c_p(r)} - 2 \varepsilon \left(1 - \frac{\varepsilon}{2}\right) \right) - 2 \gamma(r)  \right] \right).
    \end{multline}
\end{lemma}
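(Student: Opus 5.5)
The plan is a union bound over pairs $(\bsig_0,\btau)$ with $d(\bsig_0,\btau)=rN$, combined with the independence of $G_{\bsig_0}(\btau)$ and $H(\bsig_0)$ noted just before the lemma. Concretely, we bound the probability in \eqref{eq:P2} by
$$
\sum_{\bsig_0}\ \sum_{\btau\in S_r(\bsig_0)} \mathbb{P}\left( -H(\bsig_0) > \beta_c(1-\varepsilon)N \right)\ \mathbb{P}\left( -G_{\bsig_0}(\btau) > N\beta_c(1-\varepsilon)\delta(1-c_p(r)) \right).
$$
Each term factorizes because $G_{\bsig_0}(\btau)$ is a centered Gaussian uncorrelated with $H(\bsig_0)$, and the pair is jointly Gaussian. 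The number of pairs is $2^N|S_r| \leq 2^N e^{N\gamma(r)}$ by \eqref{eq:binom}.

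Next, each factor is evaluated with the upper bound in \eqref{eq:errorint}.

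\textbf{First factor.} Take $u_1=\sqrt N\beta_c(1-\varepsilon)$, which gives
$$
\Phi(u_1)\leq \frac{1}{\sqrt{2\pi N}\,\beta_c(1-\varepsilon)}\,e^{-N\beta_c^2(1-\varepsilon)^2/2}.
$$
Multiplied by $2^N=e^{N\beta_c^2/2}$, this leaves $e^{N\beta_c^2\varepsilon(1-\varepsilon/2)}$, since $1-(1-\varepsilon)^2=2\varepsilon(1-\varepsilon/2)$. This accounts for the $-2\varepsilon(1-\varepsilon/2)$ term in the exponent.

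\textbf{Second factor.} Normalize by the standard deviation $\sqrt{N(1-c_p(r)^2)}$, so that
$$
u_2=\sqrt N\,\beta_c(1-\varepsilon)\delta\,\frac{1-c_p(r)}{\sqrt{1-c_p(r)^2}} = \sqrt N\,\beta_c(1-\varepsilon)\delta\sqrt{\frac{1-c_p(r)}{1+c_p(r)}}.
$$
Then $u_2^2/2$ produces the term $\frac{N}{2}\beta_c^2(1-\varepsilon)^2\delta^2\frac{1-c_p(r)}{1+c_p(r)}$. The prefactor $1/(\sqrt{2\pi}\,u_2)$ contributes $\sqrt{1+c_p(r)}\big/\big(\sqrt{2\pi N}\,\beta_c(1-\varepsilon)\delta\sqrt{1-c_p(r)}\big)$.

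Multiplying the two prefactors gives exactly $\sqrt{1+c_p(r)}\big/\big(2\pi N\beta_c^2(1-\varepsilon)^2\delta\sqrt{1-c_p(r)}\big)$, as in \eqref{eq:P2}.

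\textbf{Entropy term.} The factor $e^{N\gamma(r)}$ from $|S_r|$ appears in \eqref{eq:P2} as $-2\gamma(r)$ inside $-\frac{N}{2}[\cdots]$. Combining the three exponentials then yields exactly the stated bound.

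The only point needing care is justifying independence, which the Gaussian decomposition \eqref{eq:Gdec} provides: the covariance computation $\mathbb{E}[H(\bsig_0)G_{\bsig_0}(\btau)]=N c_p(r)-N c_p(r)=0$ together with joint Gaussianity. Everything else is bookkeeping, so I expect no real obstacle; the union bound is crude but is exactly what the stated estimate reflects.
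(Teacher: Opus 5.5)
Your proposal is correct and follows the paper's proof essentially verbatim: a union bound over $\bsig_0$ and $\btau\in S_r(\bsig_0)$, factorization using the independence of $G_{\bsig_0}(\btau)$ and $H(\bsig_0)$, and then the binomial bound \eqref{eq:binom} together with the upper Gaussian tail estimate \eqref{eq:errorint}. Your bookkeeping of the prefactors and of the exponent, including $2^N=e^{N\beta_c^2/2}$ and $u_2=\sqrt N\beta_c(1-\varepsilon)\delta\sqrt{(1-c_p(r))/(1+c_p(r))}$, is accurate.
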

\begin{proof}
    We use the union bound to estimate the left side of~\eqref{eq:P2} by
    \begin{align}
         \sum_{\bsig_0 \in \{ -1,1\}^N }& \mathbb{P}\left( \bsig_0 \in \mathcal{L}_\varepsilon \; \mbox{and} \; \min_{\btau \in S_r(\bsig_0)} G_{\bsig_0}(\btau) <- N \beta_c (1-\varepsilon) \delta (1-c_p(r) ) \right) \notag \\
        & \leq  \sum_{\bsig_0 \in \{ -1,1\}^N } \mathbb{P}\left( \bsig_0 \in \mathcal{L}_\varepsilon\right) \sum_{\btau \in S_r(\bsig_0)} \mathbb{P}\left(G_{\bsig_0}(\btau) \leq -N \beta_c (1-\varepsilon) \delta (1-c_p(r) )\right) \notag \\
        & \leq 2^N \Phi(\sqrt{N} \beta_c (1-\varepsilon))\  \left| S_r(\bsig_0) \right| \Phi\left( \sqrt{N} \beta_c (1-\varepsilon) \delta \frac{1-c_p(r) }{\sqrt{1 - c_p(r)^2}} \right) . 
    \end{align}
    The assertion then follows from~\eqref{eq:binom} and the standard estimate~\eqref{eq:errorint} on the error integral. 
\end{proof}

\subsection{Proof of Theorem~\ref{thm:main}}

\begin{proof}
    Let $ \delta\in (0,1) $ be arbitrary and pick $ \varepsilon \in (0, 1 ) $ small enough such that 
    $$ x_{\varepsilon,\delta} \coloneqq \frac{3\varepsilon}{(1-\varepsilon)^2\delta^2} < 1 . 
    $$
    We now pick 
    \begin{equation}\label{def:r}
        r_{\varepsilon,\delta}(p) \coloneqq \frac{1}{2p} \ln \frac{1+x_{\varepsilon,\delta} }{1- x_{\varepsilon,\delta}} = \frac{\arctanh(x_{\varepsilon,\delta})}{p} .
    \end{equation}
    This ensures that 
    $$ c_p(r_{\varepsilon,\delta}(p)) \leq e^{-2p \ r_{\varepsilon,\delta}(p)} = \frac{1-x_{\varepsilon,\delta} }{1+ x_{\varepsilon,\delta}} 
    $$
    and hence
    $$
    (1-\varepsilon)^2 \delta^2 \ \frac{1-c_p(r_{\varepsilon,\delta}(p))}{1+c_p(r_{\varepsilon,\delta}(p))} \geq (1-\varepsilon)^2 \delta^2  x_{\varepsilon,\delta} = 3 \varepsilon  . 
    $$
    In case $ p \geq p_{\varepsilon,\delta} $ with $ p_{\varepsilon,\delta} \in \mathbb{N} $ large enough such that $ r_{\varepsilon,\delta}(p) < 4^{-1} $ and 
    \begin{equation}\label{eq:upper_binary_r}
        \gamma(r_{\varepsilon,\delta}(p)) \leq \frac{\beta_c^2 \varepsilon}{4} ,
    \end{equation}
    the exponent in the right side of \eqref{eq:P2} is non-positive and hence the probability converges to zero as $ N \to \infty $. With these choices of parameters, Corollary~\ref{cor:P1} and Lemma~\ref{lem:P2} imply that
    \begin{multline*}
    \mathbbm{P}\left( \mathcal{G}_{\varepsilon,\delta}(r_{\varepsilon,\delta}(p))^c \right)\leq 1 - \mathbb{P}\left( |\mathcal{L}_\varepsilon | > |B_{2r_{\varepsilon,\delta}(p)}| \right)\\  + \mathbb{P}\left( \mbox{There is $ \bsig_0 \in \mathcal{L}_\varepsilon$:} \quad 
        \min_{\btau \in S_{r_{\varepsilon,\delta}(p)}(\bsig_0)} G_{\bsig_0}(\btau) < -N \beta_c (1-\varepsilon) \delta (1-c_p(r_{\varepsilon,\delta}(p)) ) \right)    
    \end{multline*}
    converges to zero as $ N \to \infty $. However, on $ \mathcal{G}_{\varepsilon,\delta}(r_{\varepsilon,\delta}(p)) $, Lemma~\ref{lem:bottlesum} implies an exponential bound on the mixing time provided
    \begin{equation}\label{eq:cutoff_beta}    
    \beta > \frac{\gamma(r_{\varepsilon,\delta}(p)) \ ( 1 + x_{\varepsilon,\delta}) }{ \beta_c (1-\varepsilon)(1-\delta) 2 x_{\varepsilon,\delta} } .
    \end{equation}
    
    Since $$\gamma(r_{\varepsilon,\delta}(p)) \leq \frac{2 \arctanh( x_{\varepsilon,\delta}) }{p} \ln \frac{ p }{ \arctanh( x_{\varepsilon,\delta}) } , $$ this concludes the proof. 
\end{proof}\noindent
\textbf{On an optimal value for $p_0$:}
Our main restriction on $p$ comes from inequality \eqref{eq:upper_binary_r}. For small $\eps$, this condition takes the form
\[
p_{\eps,\delta}\sim \frac{\ln(1/\eps)\,\arctanh(x_{\eps,\delta})}{\eps}.
\]
A suitable choice of $\delta$ can reduce this bound substantially. This is especially valuable for even \(p\), where one can use a symmetry argument similar to that of \cite{S25} to avoid using Proposition \ref{prop_concentration}, and hence eliminate the lower bound \(p(\eps) \ge C/\eps\). However, since we require the set \(\mathcal{L}_\eps\) to be nonempty, for smaller values of $p$ we must take \(\eps\) sufficiently large.

We can also optimize the lower bound in \eqref{eq:cutoff_beta} with respect to $\eps$ and $\delta$ in the regime of asymptotically large $p$. In that case, the lower bound is of the form
\[
C_{\eps,\delta}\frac{\ln p}{p}(1+o_p(1)):=\frac{\arctanh(x_{\eps,\delta})(1+x_{\eps,\delta})}{2\beta_c(1-\eps)(1-\delta)x_{\eps,\delta}}\,
\frac{\ln p}{p}(1+o_p(1)).
\]
Since $\arctanh(x)\geq x$ for all $x\geq 0$, we obtain
$
C_{\eps,\delta}\geq 
\frac{1+x_{\eps,\delta}}{2\beta_c(1-\eps)(1-\delta)}
\geq 
\frac{1}{2\beta_c} $. 
For small $\eps$ and $\delta$, this lower bound is essentially sharp. Indeed, choose
$\delta:=(6\eps)^{1/3}$. Then
$x_{\eps,\delta}=\left(\frac{3}{4}\eps\right)^{1/3}(1+O(\eps))$,
and hence
\[
\frac{1}{2\beta_c}
\leq
\min_{0<\eps,\delta,\;x_{\eps,\delta}<1} C_{\eps,\delta}
\leq
\frac{1}{2\beta_c}\left(1+\left(\frac{3}{4}\eps\right)^{1/3}+O(\eps^{2/3})\right).
\]

\paragraph{Acknowledgements.} The Deutsche Forschungsgemeinschaft funded this work as part of DFG–TRR 352\--Project-ID 470903074. 

\bibliography{bibliography}{}

@book{Bov06,
	address = {Cambridge},
	author = {Bovier, A.},
	db = {Cambridge Core},
	dp = {Cambridge University Press},
	pages = {i-vi},
	publisher = {Cambridge University Press},
	series = {Cambridge Series in Statistical and Probabilistic Mathematics},
	title = {Statistical Mechanics of Disordered Systems: A Mathematical Perspective},
    year={2010}
}

@article{GJK25,
title = "Shattering in the {I}sing p-spin glass model",
author = "D. Gamarnik and A. Jagannath and E. C. Kızıldağ ",
year = "2025",
volume = "193",
pages = "89--141",
journal = "Probability Theory and Related Fields",
publisher = "Springer New York",
number = "1-2",
}

@book{LY17,
  author    = {Levin, D. A. and Peres, Y.},
  title     = {Markov Chains and Mixing Times},
  publisher = {American Mathematical Society},
  year      = {2017},
  edition   = {2nd}
}

@article{BJ18,
  title={Spectral gap estimates in mean field spin glasses},
  author={Ben Arous, G. and Jagannath, A.},
  journal={Communications in Mathematical Physics},
  volume={361},
  number={1},
  pages={1--52},
  year={2018},
  publisher={Springer}
}

@article{ABX22,
      title={Spectral Gap Estimates for Mixed $p$-Spin Models at High Temperature}, 
      author={A. Adhikari and C. Brennecke and C. Xu and H.-T. Yau},
      journal={Probab. Theory Relat. Fields},
      volume={189},
      pages={879--907},
      year={2024}, 
}

@unpublished{S25,
      title={Exponentially Slow Mixing of the Low Temperature SK Model}, 
      author={M. Sellke},
      year={2025},
      note={arXiv:2511.22621},
      archivePrefix={arXiv},
      primaryClass={math.PR}, 
}

@unpublished{DGZ25,
      title={Maximally-stable Local Optima in Random Graphs and Spin Glasses: Phase Transitions and Universality}, 
      author={Y. Dandi and D. Gamarnik and L. Zdeborová},
      year={2025},
    note={arXiv:2305.03591},
      archivePrefix={arXiv},
      primaryClass={math.PR}, 
}

@unpublished{HS25,
      title={Strong Low Degree Hardness for Stable Local Optima in Spin Glasses}, 
      author={B. Huang and M. Sellke},
      year={2025},
    note={arXiv:2501.06427},
      archivePrefix={arXiv},
      primaryClass={cond-mat.dis-nn},
}

@article{DGU14,
   title={Structure and eigenvalues of heat-bath Markov chains},
   volume={454},
   journal={Linear Algebra and its Applications},
   publisher={Elsevier BV},
   author={Dyer, M. and Greenhill, C. and Ullrich, M.},
   year={2014}, pages={57–71} }

@article{BJ24,
author = {Ben Arous, G. and Jagannath, A.},
title = {Shattering versus metastability in spin glasses},
journal = {Communications on Pure and Applied Mathematics},
volume = {77},
number = {1},
pages = {139-176},
year = {2024}
}

@article{AMS25_sampling,
   title={Sampling from mean-field Gibbs measures via diffusion processes},
   volume={6},
   number={3},
   journal={Probability and Mathematical Physics},
   publisher={Mathematical Sciences Publishers},
   author={A. {El Alaoui} and A. Montanari and M. Sellke},
   year={2025},
    pages={961–1022} }

@article{Al25,
title = {Near-optimal shattering in the {Ising} pure p-spin and rarity of solutions returned by stable algorithms},
journal = {Stochastic Processes and their Applications},
volume = {192},
pages = {104792},
year = {2026},

author = {A. {El Alaoui}},
}

@article{AMS25_spherical,
author = {{El Alaoui}, Ahmed and Montanari, Andrea and Sellke, Mark},
year = {2025},
pages = {},
title = {Shattering in Pure Spherical Spin Glasses},
volume = {406},
journal = {Communications in Mathematical Physics},
}

@article{FLPR12,
  title = {Two-step relaxation next to dynamic arrest in mean-field glasses: Spherical and {I}sing $p$-spin model},
  author = {Ferrari, U. and Leuzzi, L. and Parisi, G. and Rizzo, T.},
  journal = {Phys. Rev. B},
  volume = {86},
  issue = {1},
  pages = {014204},
  numpages = {8},
  year = {2012},
  publisher = {American Physical Society},
}

@unpublished{Z24,
      title={On the {G}ardner Transition in the {I}sing Pure $p$-Spin Glass}, 
      author={Y. Zhou},
      year={2024},
      note={arXiv:2408.14630},
      archivePrefix={arXiv},
      primaryClass={math.PR},
}

@article{G85,
    author = "Gardner, E.",
    title = "{Spin glasses with p spin interactions}",
    reportNumber = "Print-85-0710 (EDINBURGH)",
    journal = "Nucl. Phys. B",
    volume = "257",
    pages = "747--765",
    year = "1985"
}

@article{MR03,
   title={On the nature of the low-temperature phase in discontinuous mean-field spin glasses},
   volume={33},
   number={3},
   journal={The European Physical Journal B - Condensed Matter},
   publisher={Springer Science and Business Media LLC},
   author={Montanari, A. and Ricci-Tersenghi, F.},
   year={2003},
   month={June}, pages={339–346} }

@article{BB19,
   title={A very simple proof of the LSI for high temperature spin systems},
   volume={276},
   number={8},
   journal={Journal of Functional Analysis},
   publisher={Elsevier BV},
   author={Bauerschmidt, R. and Bodineau, T.},
   year={2019},
   month={Apr}, pages={2582–2588} }

@article{EKZ22,
author = {Eldan, R. and Koehler, F. and Zeitouni, O.},
year = {2022},
month = {04},
pages = {},
title = {A spectral condition for spectral gap: fast mixing in high-temperature {I}sing models},
volume = {182},
journal = {Probability Theory and Related Fields},
}

@article{AnEA21,
  author    = {N. Anari and
               V. Jain and
               F. Koehler and
               H. T. Pham and
               T.{-}D. Vuong},
  title     = {Entropic Independence {I}: Modified Log-{S}obolev Inequalities for Fractionally Log-Concave Distributions and High-Temperature {Ising} Models},
  journal   = {CoRR},
  volume    = {abs/2106.04105},
  year      = {2021},
  eprinttype = {arXiv},
  eprint    = {2106.04105},
}

@inproceedings{AJK23,
author = {N. Anari and V. Jain and F. Koehler and H. T. Pham and T.-D. Vuong},
title = {Universality of Spectral Independence with Applications to Fast Mixing in Spin Glasses},
booktitle = {Proceedings of the 2024 Annual ACM-SIAM Symposium on Discrete Algorithms (SODA)},
year={2024},
chapter = {},
pages= {5029--5056},
}

@unpublished{AAS25,
      title={On the Discontinuous Breaking of Replica Symmetry and Shattering in Mean-Field Spin Glasses}, 
      author={A. Auffinger and A. {El Alaoui} and M. Sellke},
      year={2025},
      note={arXiv:2506.02238},
      archivePrefix={arXiv},
      primaryClass={math.PR}, 
}

@article{GJ19,
   title={On the spectral gap of spherical spin glass dynamics},
   volume={55},
   number={2},
   journal={Annales de l’Institut Henri Poincaré, Probabilités et Statistiques},
   publisher={Institute of Mathematical Statistics},
   author={Gheissari, R. and Jagannath, A.},
   year={2019},
   month={May} }

@unpublished{HMP24,
      title={Sampling from Spherical Spin Glasses in Total Variation via Algorithmic Stochastic Localization}, 
      author={B. Huang and A. Montanari and H. T. Pham},
      year={2024},
      note={arXiv:2404.15651},
      archivePrefix={arXiv},
      primaryClass={math.PR}, 
}

@article{BBM96,
   title={Dynamics within metastable states in a mean-field spin glass},
   volume={29},
   number={5},
   journal={Journal of Physics A: Mathematical and General},
   publisher={IOP Publishing},
   author={Barrat, A. and Burioni, R. and Mézard, M.},
   year={1996},
   month={Mar}, pages={L81–L87} }

@ARTICLE{CHS93,
	author = {Crisanti, A. and Horner, H. and Sommers, H.-J.},
	title = {The spherical p-spin interaction spin-glass model - The dynamics},
	year = {1993},
	journal = {Zeitschrift für Physik B Condensed Matter},
	volume = {92},
	number = {2},
	pages = {257 – 271},
	type = {Article},
	publication_stage = {Final},
	source = {Scopus},
}

@article{CK93,
   title={Analytical solution of the off-equilibrium dynamics of a long-range spin-glass model},
   volume={71},
   number={1},
   journal={Physical Review Letters},
   publisher={American Physical Society (APS)},
   author={Cugliandolo, L. F. and Kurchan, J.},
   year={1993},
   month= {July}, pages={173–176} }

@article{KT87,
  title = {p-spin-interaction spin-glass models: Connections with the structural glass problem},
  author = {Kirkpatrick, T. R. and Thirumalai, D.},
  journal = {Phys. Rev. B},
  volume = {36},
  issue = {10},
  pages = {5388--5397},
  numpages = {0},
  year = {1987},
  month = {Oct},
  publisher = {American Physical Society},
}

@unpublished{kmw26,
      title={The Quantum Random Energy Model is the Limit of Quantum $ p $-Spin Glasses}, 
      author={A. Kouraich and C. Manai and S. Warzel},
      year={2026},
      note={arXiv:2505.02458. To appear in: Markov Processes and Related Fields}
}

@article{MW20,
	author = {Manai, C. and Warzel, S.},
	journal = {Journal of Statistical Physics},
	number = {1},
	pages = {654--664},
	title = {Phase Diagram of the Quantum Random Energy Model},
	volume = {180},
	year = {2020}
	}

@article{MW25,
	author = {C. Manai and S. Warzel},
	journal = {Electronic Journal of Probability},
	pages = {1 -- 39},
	publisher = {Institute of Mathematical Statistics and Bernoulli Society},
	title = {{A Parisi formula for quantum spin glasses}},
	volume = {30},
	year = {2025}}

@article{MW21b,
  author  = {C. Manai and S. Warzel},
  title   = {The Quantum Random Energy Model as a Limit of $p$-Spin Interactions},
  journal = {Reviews in Mathematical Physics},
  volume  = {33},
  number  = {01},
  pages   = {2060013},
  year    = {2021}
}

@inproceedings{AKV24,
author = {Anari, N. and Koehler, F. and Vuong, T.-D.},
title = {Trickle-Down in Localization Schemes and Applications},
year = {2024},
publisher = {Association for Computing Machinery},
address = {New York, NY, USA},
booktitle = {Proceedings of the 56th Annual ACM Symposium on Theory of Computing},
pages = {1094–1105},
numpages = {12},
location = {Vancouver, BC, Canada},
series = {STOC 2024}
}

@article{SK75,
  title = {Solvable Model of a Spin-Glass},
  author = {Sherrington, D. and Kirkpatrick, S.},
  journal = {Phys. Rev. Lett.},
  volume = {35},
  issue = {26},
  pages = {1792--1796},
  numpages = {0},
  year = {1975},
  publisher = {American Physical Society},
}

@unpublished{DLSS26,
      title={Potential Hessian Ascent {III}: Sampling the {S}herrington--{K}irkpatrick Model at Beta {$<$} 1/2}, 
      author={E. Davies and H. Lee and J. S. Sandhu and J. Shi},
      year={2026},
      note={arXiv:2605.03718},
      archivePrefix={arXiv},
      primaryClass={math.PR},
}

@inproceedings{HMRW24,
author = {Huang, B. and Mohanty, S. and Rajaraman, A. and Wu, D. X.},
title = {Weak Poincar\'{e} Inequalities, Simulated Annealing, and Sampling from Spherical Spin Glasses},
year = {2025},
publisher = {Association for Computing Machinery},
address = {New York, NY, USA},
booktitle = {Proceedings of the 57th Annual ACM Symposium on Theory of Computing},
pages = {915–923},
numpages = {9},
location = {Prague, Czechia},
series = {STOC '25}
}
\bibliographystyle{plainnat}
\end{document}